\title{A surprising formula for Sobolev norms}
\author{Ha\"\i m Brezis}
\address{Department of Mathematics\\
  Rutgers University, Hill Center, Busch Campus\\ 
  110 Frelinghuysen Road, Piscataway, NJ 08854, USA}
\address{Departments of Mathematics and Computer Science\\ Technion, Israel Institute of Technology\\ 32.000 Haifa, Israel}
\address{Laboratoire Jacques-Louis Lions\\
  Sorbonne Universit\'es, UPMC Universit\'e Paris-6, 4  place Jussieu\\
  75005 Paris, France}
\email{brezis@math.rutgers.edu}
\author{Jean Van Schaftingen}
\address{Universit\'e catholique de Louvain\\ 
Institut de Recherche en Math\'ematique et Physique\\
Chemin du Cyclotron 2 bte L7.01.01\\
1348 Louvain-la-Neuve\\
Belgium}
\email{Jean.VanSchaftingen@uclouvain.be}
\author{Po-Lam Yung}
\address{Mathematical Sciences Institute,
Australian National University,
125 Science Road,
Canberra ACT 2601,
Australia}
\address{Department of Mathematics,
The Chinese University of Hong Kong,
Ma Liu Shui,
Hong Kong}
\email{PoLam.Yung@anu.edu.au}
\email{plyung@math.cuhk.edu.hk}
\keywords{Fractional Sobolev space  $|$  Marcinkiewicz space  $|$  fractional Gagliardo--Nirenberg interpolation inequalities} 
\newcommand{\defeq}{:=}
\newcommand{\abs}[1]{{\lvert #1 \rvert}}
\newcommand{\norm}[2][]{{\lVert #2 \rVert}_{#1}}
\newcommand{\seminorm}[2][]{{\lvert #2 \rvert}_{#1}}
\newcommand{\Biggnorm}[2][]{{\Biggl\lVert #2 \Biggr\rVert}_{#1}}
\newcommand{\quasinorm}[2][]{{[ #2 ]}_{#1}}
\newcommand{\Biggquasinorm}[2][]{{\Biggl[ #2 \Biggr]}_{#1}}
\newcommand{\st}{\;:\;}
\newcommand{\Rset}{\mathbb{R}}
\newcommand{\dif}{\,\mathrm{d}}
\newcommand{\eofs}{\,}
\newcommand{\R}{\ensuremath{\mathbb{R}}}
\renewcommand{\S}{\ensuremath{\mathbb{S}}}
\newtheorem{theorem}{Theorem}[section]
\newtheorem{proposition}[theorem]{Proposition}
\newtheorem{corollary}[theorem]{Corollary}
\theoremstyle{definition}
\theoremstyle{remark}
\newtheorem{remark}[theorem]{Remark}
\numberwithin{equation}{section}
\begin{document}

\begin{abstract}
We establish the equivalence between the Sobolev semi-norm $\|\nabla u\|_{L^p}$ and a quantity obtained when replacing strong $L^p$ by weak $L^p$ in the Gagliardo semi-norm $|u|_{W^{s,p}}$ computed at $s = 1$. As corollaries we derive alternative estimates in some exceptional cases (involving $W^{1,1}$) where the ``anticipated'' fractional Sobolev and Gagliardo-Nirenberg inequalities fail.
\end{abstract}

\maketitle



\section{Introduction}

Fractional Sobolev spaces \(W^{s, p}\) (also called Slobodeskii spaces) play a major role in many questions involving partial differential equations.
On $\R^N$, $N \geq 1$, they are associated with the Gagliardo semi-norm
\begin{equation}
\label{eq_Jai7ahg2tahz4ua3zaishatu}
\abs{u}_{W^{s, p}}^p 
\defeq \iint\limits_{\Rset^N \times \Rset^N} \frac{\abs{u (x) - u (y)}^p}{\abs{x - y}^{N + sp}} \dif x \dif y\eofs
\end{equation}
where \(0 < s < 1\) and \(1 \le p < \infty\).
A well-known ``drawback'' of the Gagliardo semi-norm is that one does not recover the Sobolev semi-norm $\|\nabla u\|_{L^p}^p$ if one takes $s = 1$ in \eqref{eq_Jai7ahg2tahz4ua3zaishatu}. In fact, for every \(1 \le p < \infty\) and every measurable function \(u\),
\begin{equation}
\begin{split}
\label{eq_xoh6aiwef2eikie7thahJ8Ee}
  \Biggnorm[L^p (\Rset^N \times \Rset^N)]
  {\frac{u (x) - u (y)}{\abs{x - y}^{\frac{N}{p} + 1}}}^p
  &= \iint\limits_{\Rset^N \times \Rset^N} \frac{\abs{u (x) - u (y)}^p}{\abs{x - y}^{N + p}} \dif x \dif y \\
  &= \infty
\end{split}
\end{equation}
unless \(u\) is a constant; see \cite{Bourgain_Brezis_Mironescu_2000} and also \cite{Brezis_2002}.

One way to recover \(\norm[L^p]{\nabla u}^p\) out of the Gagliardo semi-norms is to consider the quantity \((1 - s)\seminorm[W^{s, p}]{u}^p\) with \(0 < s <1\) and show that it converges as \(s \nearrow 1\) to a multiple of \(\norm[L^p]{\nabla u}^p\). This is a special case of the BBM formula in Bourgain-Brezis-Mironescu \cite{Bourgain_Brezis_Mironescu_2001} (see also \cite{Brezis_2002,VanSchaftingen_Willem_2004,Davila}), which furthermore enters (when $p=1$ and $u$ is a characteristic function) in the study of  ``nonlocal minimal surfaces'' and ``$s$-perimeters'' (see e.g. \cite{ADM,CV,FFMMM}).

The primary goal of this paper is to propose an alternative route to repair this ``defect'', simply replacing the $L^p$ norm $\|\cdot\|_{L^p}$ in \eqref{eq_xoh6aiwef2eikie7thahJ8Ee} by the Marcinkiewicz $M^p$ (i.e.\ weak~$L^p$) quasi-norm $[\,\cdot\,]_{M^p}$. The central result of the paper is \cref{theorem_Mp}.

In a seemingly different direction, it is well-known that in some exceptional cases the ``anticipated'' fractional Sobolev-type and Gagliardo--Nirenberg-type estimates may fail (in particular when they involve $\|\nabla u\|_{L^1}$). 
A second goal of this paper is to discuss a partial list of such failures (for a complete list see \cite{Brezis_Mironescu_2018,Brezis_Mironescu_2019}) and to present alternative (weaker) estimates where strong \(L^p\) is replaced by weak~\(L^p\); see \cref{theorem_W11_Wsp,theorem_intro,theorem_intro_2}. They can all be derived as immediate consequences of \cref{theorem_Mp} applied with $p = 1$. 

Our main result is the following


\begin{theorem} \label{theorem_Mp}
For every $N \geq 1$, there exist constants $c = c(N) > 0$ and $C = C(N)$ such that 
\begin{equation} \label{eq:new_char_sob_norm}
c^p \|\nabla u\|_{L^p(\R^N)}^p \leq \Big [ \frac{u(x)-u(y)}{|x-y|^{\frac{N}{p}+1}} \Big]_{M^p(\R^N \times \R^N)}^p \leq C \|\nabla u\|_{L^p(\R^N)}^p
\end{equation}
for all $u \in C^{\infty}_c(\R^N)$ and all $1 \leq p < \infty$. 
\end{theorem}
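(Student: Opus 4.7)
Write $F(x,y) = (u(x)-u(y))/|x-y|^{N/p+1}$ so that the middle quantity in \eqref{eq:new_char_sob_norm} equals $\sup_{\lambda>0}\lambda^p\,|\{(x,y):|F(x,y)|>\lambda\}|$; my goal is to bound this quantity above and below by multiples of $\|\nabla u\|_{L^p}^p$.

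\textbf{Upper bound.} My plan is to exploit the classical pointwise estimate
\[
|u(x)-u(y)| \leq C|x-y|\bigl(M|\nabla u|(x)+M|\nabla u|(y)\bigr),
\]
valid for $u \in C^\infty_c(\R^N)$, with $M$ the Hardy--Littlewood maximal operator. This shows that the superlevel set $\{|F|>\lambda\}$ is contained in $\{M|\nabla u|(x)>c\lambda|x-y|^{N/p}\} \cup \{M|\nabla u|(y)>c\lambda|x-y|^{N/p}\}$. For the first piece, holding $x$ fixed, the admissible $y$'s form a ball around $x$ of radius at most $(CM|\nabla u|(x)/\lambda)^{p/N}$; Fubini then yields
\[
|\{|F|>\lambda\}| \leq C\lambda^{-p}\int_{\R^N}\bigl(M|\nabla u|\bigr)^p\,dx.
\]
For $1<p<\infty$, the Hardy--Littlewood maximal theorem closes the estimate with $\|M|\nabla u|\|_{L^p} \leq C_p\|\nabla u\|_{L^p}$.

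\textbf{Lower bound.} Here I would analyze $F$ near the diagonal via Taylor's formula: for $y=x+r\omega$ with $\omega\in S^{N-1}$ and $r$ small,
\[
F(x,x+r\omega) = \frac{|\nabla u(x)\cdot\omega|}{r^{N/p}}\bigl(1+O(r)\bigr).
\]
For $\lambda$ sufficiently large (depending on $u$), the set $\{r>0 : |F(x,x+r\omega)|>\lambda\}$ is essentially the interval $(0,r^*]$ with $r^*\approx(|\nabla u(x)\cdot\omega|/\lambda)^{p/N}$; integrating the weight $r^{N-1}\,dr$ produces a contribution of order $|\nabla u(x)\cdot\omega|^p/(N\lambda^p)$. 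Averaging over $\omega\in S^{N-1}$ using the identity $\int_{S^{N-1}}|e\cdot\omega|^p\,d\omega=C(N,p)$ for $|e|=1$, and integrating over $x$, yields
\[
|\{|F|>\lambda\}| \gtrsim \frac{C(N,p)}{N\lambda^p}\|\nabla u\|_{L^p}^p,
\]
so $\sup_{\lambda}\lambda^p|\{|F|>\lambda\}| \geq c\|\nabla u\|_{L^p}^p$ upon letting $\lambda\to\infty$ (with the minor care needed at points where $|\nabla u|$ is small, which contribute negligibly to the integral).

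\textbf{Main obstacle.} I expect the chief difficulty to be the upper bound at the endpoint $p=1$, since $M|\nabla u|$ is only of weak type $(1,1)$ and $\|M|\nabla u|\|_{L^1}=\infty$ in general, so the argument above fails at the last step. A plausible remedy is to perform a Calderón--Zygmund decomposition of $|\nabla u|$ at a level tied to $\lambda$: the bounded ``good'' part is handled by the maximal-function argument above, while the contribution of the ``bad'' part is controlled directly by the total measure of its supporting cubes, which is $\lesssim \|\nabla u\|_{L^1}/\lambda$ by construction. Obtaining constants $C$ independent of $p$, as the theorem asserts, will additionally require tracking the $p$-dependence in the maximal estimate carefully, or arguing more directly along rays without passing through $M|\nabla u|$.
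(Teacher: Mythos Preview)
Your lower bound is essentially the paper's argument (Taylor expansion near the diagonal, polar coordinates, then $\lambda\to\infty$), and your upper bound for $p>1$ via the Lusin--Lipschitz inequality and the maximal theorem is exactly what the paper records as a side remark. The paper also explicitly points out that this route gives a constant $C(p,N)$ that blows up as $p\searrow 1$, so it proves neither the case $p=1$ nor the uniform-in-$p$ statement of the theorem. You correctly flag this as the main obstacle.

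The genuine gap is that your proposed remedy---a Calder\'on--Zygmund decomposition of $|\nabla u|$---is not a proof, and it is not clear it can be made into one. The difficulty is that CZ splits $|\nabla u|$, not $u$, and there is no natural corresponding splitting $u=u_g+u_b$ with $|\nabla u_g|\le C\alpha$; moreover, on the ``bad'' set $\Omega=\{M|\nabla u|>\alpha\}$ you still need to bound the $2N$-dimensional measure of $\{(x,y):|F|>\lambda\}\cap(\Omega\times\Rset^N)$, and $\Omega\times\Rset^N$ has infinite measure with no obvious substitute for the pointwise control you have lost. The paper avoids maximal functions entirely. Its key device is the one-line reduction
\[
|u(x)-u(y)|\le |x-y|^{1-\frac1p}\Bigl|\int_y^x |\nabla u|^p\Bigr|^{\frac1p},
\]
which sends the general-$p$ problem to the $p=1$ problem for the scalar function $F=\lambda^{-p}|\nabla u|^p$, and simultaneously explains why the final constant depends only on $N$. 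The $p=1$ case itself is then handled by a one-dimensional Vitali covering argument: the superlevel set is covered by squares $I\times I$ over intervals $I$ satisfying $\int_I f\ge |I|^{\gamma+1}$, a disjoint Vitali subfamily is extracted, and the defining inequality $\int_J f\ge|J|^{\gamma+1}$ is used \emph{in reverse} to bound $\sum|J|^{\gamma+1}$ by $\|f\|_{L^1}$. Higher dimensions follow by the method of rotation. This mechanism is what your sketch is missing.
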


Here \(M^p = L^p_w = L^{p, \infty}\), \(1 \le p <\infty\), is the Marcinkiewicz (=weak \(L^p\)) space modelled on \(L^p\), and
\[
\quasinorm[M^p (\R^N \times \R^N)]{f}^p \defeq \sup_{\lambda > 0} \lambda^p \mathcal{L}^{2 N} \bigl(\{ x\in \R^N \times \R^N \st \abs{f (x)} \ge \lambda\}\bigr)
\]
where \(\mathcal{L}^{2 N}\) is the Lebesgue measure on \(\R^N \times \R^N\).

In fact, one can sharpen substantially the lower bound in \eqref{eq:new_char_sob_norm}.

\begin{theorem} \label{theorem_reverse}
Let $N \geq 1$, $1 \leq p < \infty$ and $u \in C^{\infty}_c(\R^N)$. For $\lambda > 0$, let 
\begin{equation} \label{E_lambda}
E_{\lambda} := \biggl\{ (x,y) \in \R^{N} \times \R^{N} \colon x \ne y, \, \frac{|u(x)-u(y)|}{|x-y|^{\frac{N}{p} + 1}} \geq \lambda \biggr\}.
\end{equation}
Then
\begin{equation}
\lim_{\lambda \to \infty} \lambda^p \mathcal{L}^{2 N}(E_{\lambda}) = \frac{k(p,N)}{N}  \|\nabla u\|_{L^p(\R^N)}^p.
\end{equation}
\end{theorem}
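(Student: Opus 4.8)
The plan is to slice $E_\lambda$ in the variables $(x, h)$ with $h = y - x$: by Fubini--Tonelli,
\[
\mathcal{L}^{2N}(E_\lambda) = \int_{\R^N} \mathcal{L}^N\bigl( A_\lambda(x) \bigr) \dif x, \qquad A_\lambda(x) \defeq \Bigl\{ h \in \R^N \st \tfrac{\abs{u(x+h) - u(x)}}{\abs{h}^{\frac{N}{p} + 1}} \geq \lambda \Bigr\}.
\]
Set $L \defeq \norm[L^\infty]{\nabla u}$, $C \defeq \tfrac12 \norm[L^\infty]{D^2 u}$, and $B_\lambda \defeq \{ h \in \R^N \st \abs{h} \le (L/\lambda)^{p/N} \}$. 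Since $u$ is Lipschitz we have $A_\lambda(x) \subseteq B_\lambda$, so as $\lambda \to \infty$ only small $h$ contribute, and there one should replace $u(x+h) - u(x)$ by $\nabla u(x) \cdot h$, with error at most $C\abs{h}^2$; this gives $A_\lambda^-(x) \subseteq A_\lambda(x) \subseteq A_\lambda^+(x) \cap B_\lambda$, where $A_\lambda^\pm(x) \defeq \{ h \in \R^N \st \abs{\nabla u(x) \cdot h} \pm C\abs{h}^2 \geq \lambda \abs{h}^{\frac{N}{p} + 1} \}$ (and $A_\lambda^-(x) \subseteq B_\lambda$ too).

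For fixed $x$ I would compute the limit of $\lambda^p \mathcal{L}^N$ of the two outer sets by rescaling $h = \lambda^{-p/N} \rho$ and passing to polar coordinates $\rho = r\sigma$, $\sigma \in \S^{N-1}$; the exponents are arranged so that the change-of-variables factor $\lambda^{-p}$ cancels the prefactor $\lambda^p$, leaving
\[
\lambda^p \mathcal{L}^N\bigl( A_\lambda^\pm(x) \cap B_\lambda \bigr) = \int_{\S^{N-1}} \int_{\{ 0 < r \le L^{p/N} \st r^{N/p} \mp C\lambda^{-p/N} r \le \abs{\nabla u(x) \cdot \sigma} \}} r^{N-1} \dif r \dif \sigma .
\]
Since $\abs{\nabla u(x) \cdot \sigma} \le L$, as $\lambda \to \infty$ the inner domain converges, for $\sigma$ outside the null set $\{ \nabla u(x) \cdot \sigma = 0 \}$ and for a.e.\ $r$, to the interval $[0, \abs{\nabla u(x) \cdot \sigma}^{p/N}]$, while the integrand remains dominated by the fixed integrable function $r^{N-1} \mathbf{1}_{[0, L^{p/N}]}(r)$. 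Dominated convergence — which applies unchanged when $\nabla u(x) = 0$, yielding limit $0$ — together with the sandwich then gives
\[
\lim_{\lambda \to \infty} \lambda^p \mathcal{L}^N\bigl( A_\lambda(x) \bigr) = \frac1N \int_{\S^{N-1}} \abs{\nabla u(x) \cdot \sigma}^p \dif \sigma = \frac{k(p,N)}{N} \abs{\nabla u(x)}^p ,
\]
with $k(p,N) \defeq \int_{\S^{N-1}} \abs{\sigma \cdot e}^p \dif \sigma$ for an arbitrary unit vector $e$ (independent of $e$ by rotational invariance of the surface measure).

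It remains to integrate this pointwise identity in $x$ by a second dominated convergence. A majorant uniform in $\lambda$ is available: first, $\lambda^p \mathcal{L}^N(A_\lambda(x)) \le \lambda^p \mathcal{L}^N(B_\lambda) = \omega_N L^p$ for all $x$, where $\omega_N$ is the volume of the unit ball in $\R^N$; second, if $\operatorname{supp} u \subseteq \{ \abs{x} \le R \}$ and $\lambda \ge \norm[L^\infty]{u}$, then $A_\lambda(x) = \emptyset$ whenever $\abs{x} > R + 1$, since there $u(x) = 0$ forces any $h \in A_\lambda(x)$ to satisfy $x + h \in \operatorname{supp} u$, hence $\abs{h} > 1$, hence the contradiction $\lambda < \lambda \abs{h}^{\frac{N}{p} + 1} \le \abs{u(x+h)} \le \norm[L^\infty]{u} \le \lambda$. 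Thus, for large $\lambda$, the function $x \mapsto \lambda^p \mathcal{L}^N(A_\lambda(x))$ is bounded by $\omega_N L^p \mathbf{1}_{\{ \abs{x} \le R + 1 \}} \in L^1(\R^N)$, and passing to the limit under the integral and undoing the slicing gives $\lim_{\lambda \to \infty} \lambda^p \mathcal{L}^{2N}(E_\lambda) = \frac{k(p,N)}{N} \norm[L^p]{\nabla u}^p$ (incidentally sharpening the lower constant in \cref{theorem_Mp} to $c^p = k(p,N)/N$).

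The step I expect to be delicate is controlling the Taylor error when $N < p$. There $r \mapsto r^{N/p}$ is sublinear, so the set $\{ r^{N/p} - C\lambda^{-p/N} r \le \abs{\nabla u(x) \cdot \sigma} \}$ describing $A_\lambda^+(x)$ is \emph{unbounded}, of infinite $N$-dimensional volume, and one genuinely needs the a~priori Lipschitz truncation by $B_\lambda$ \emph{before} rescaling in order to get a finite, $\lambda$-independent majorant — together with the remark that, because $\abs{\nabla u(x) \cdot \sigma} \le L$, intersecting with $\{ r \le L^{p/N} \}$ does not alter the limiting interval $[0, \abs{\nabla u(x) \cdot \sigma}^{p/N}]$. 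The degenerate points where $\nabla u(x) = 0$, and the measure-zero sets in $(\sigma, r)$ where the indicators jump, cost nothing and are absorbed into the same two dominated-convergence arguments.
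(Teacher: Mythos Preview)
Your proof is correct and follows essentially the same approach as the paper: Taylor expansion with quadratic remainder to sandwich the slice of $E_\lambda$ between two explicit sets, polar coordinates in $h=y-x$, the Lipschitz bound $A_\lambda(x)\subseteq B_\lambda$ for the a~priori truncation, and the compact-support argument to get an $L^1$ majorant in $x$. The only differences are stylistic: the paper handles the liminf and limsup separately---introducing an auxiliary parameter $\delta$ and monotone convergence for the lower bound, and bootstrapping the Lipschitz estimate into the Taylor inequality (writing $Ar\le A(L/\lambda)^{p/N}$) for the upper bound---whereas your rescaling $h=\lambda^{-p/N}\rho$ treats both sides symmetrically via a single dominated-convergence argument on $\S^{N-1}\times(0,L^{p/N}]$, which is arguably a bit cleaner.
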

Here
\begin{equation} \label{eq:kpN}
k(p,N) := \int_{\S^{N-1}} |e \cdot \omega|^p \dif \omega,
\end{equation}
$\mathbb{S}^{N-1}$ denotes the unit sphere in $\mathbb{R}^N$, and $e$ is any unit vector in $\R^N$.

Some comments concerning the above results are in order.

First, the validity of the upper bound in \eqref{eq:new_char_sob_norm} when \(p = 1\) is quite remarkable and somewhat unexpected. In fact, a natural strategy to establish this upper bound (such as the one presented in \cref{remark_bojarski} below) requires a strong type estimate for the maximal function (of the gradient of \(u\)), which holds when \(p > 1\), but notoriously fails at the end-point \(p = 1\). We overcome this difficulty by applying the Vitali covering lemma in a rather unconventional way which allows us to bypass the obstruction commonly arising at \(p = 1\) in this kind of situation. Thus, the hard core of the proof of the upper bound in \eqref{eq:new_char_sob_norm} concerns the case \(p = 1\). As it turns out, we can furthermore derive the case \(p > 1\) from the case \(p = 1\), at a crucial step of the argument. When \(p = 1\) and \(N = 1\) the upper bound in \eqref{eq:new_char_sob_norm} amounts to the following ``innocuous looking'' calculus inequality
\begin{equation}
\label{eq_aizohng7ahf7DaeNgaihaiTh}
\mathcal{L}^2 \left\{ (x,y) \in \R^2 \st |u(x)-u(y)| \geq |x-y|^2 \right\} \leq C \int_{\R} |u'(t)| dt
\end{equation}
for all \(u \in C^{\infty}_c(\R)\), where \(C\) is a universal constant. Surprisingly, this estimate seems to be new and our proof is more involved than expected!

Next, the lower bound in \eqref{eq:new_char_sob_norm} is a consequence of \cref{theorem_reverse}. The proof of \cref{theorem_reverse} involves new ideas, partially inspired from techniques developed in \cite{Bourgain_Brezis_Mironescu_2001}; actually, the constant \(k(p,N)\) in \eqref{eq:kpN} already appeared in the BBM formula \cite[Theorem 1.2]{Bourgain_Brezis_Mironescu_2001}. 

The proof of the upper bound in \eqref{eq:new_char_sob_norm} is presented in \cref{sect:newSobnorm}. The proof of \cref{theorem_reverse} is presented in \cref{sect:limitseminorm}.

The assertions in \cref{theorem_Mp,theorem_reverse}, which are stated for convenience when $u \in C^{\infty}_c(\R^N)$, suggest that similar conclusions hold under minimal regularity assumptions on \(u\), and that the Sobolev space \(\dot{W}^{1, p}\), \(1 < p < \infty\) (respectively $\dot{BV}$ when $p = 1$), can be identified with the space of measurable functions \(u\) satisfying \(\sup_{\lambda > 0}\lambda^p \mathcal{L}^{2N} (E_\lambda)< \infty\), or just \(\limsup_{\lambda \to \infty}\lambda^p \mathcal{L}^{2N} (E_\lambda) < \infty\).
One should also be able to replace $\R^N$ by domains $\Omega \subset \R^N$, etc. 
We will return to this circle of ideas in a forthcoming paper.

\section{Proof of \texorpdfstring{\cref{theorem_Mp}}{Theorem 1.1}}  \label{sect:newSobnorm}
  
As already mentioned the lower bound part is a consequence of \cref{theorem_reverse} whose proof is presented in Section 3: indeed, if $E_{\lambda}$ is as in \eqref{E_lambda}, then 
\[
\begin{split}
\Big [ \frac{u(x)-u(y)}{|x-y|^{\frac{N}{p}+1}} \Big]_{M^p(\R^N \times \R^N)}^p
= \sup_{\lambda > 0} \lambda^p \mathcal{L}^{2N}(E_{\lambda}) \geq \lim_{\lambda \to \infty} \lambda^p \mathcal{L}^{2N}(E_{\lambda})
\end{split}
\]
and H\"{o}lder's inequality gives
\[
\begin{split}
\left( \frac{k(p,N)}{N} \right)^{1/p} 
\geq \frac{k(1,N)}{\sigma_{N-1}^{1-\frac{1}{p}} N^{\frac{1}{p}} } = \frac{k(1,N)}{\sigma_{N-1}} \left( \frac{\sigma_{N-1}}{N} \right)^{1/p} 
\geq k(1,N) \min\left\{\frac{1}{N}, \frac{1}{\sigma_{N-1}} \right\} := c(N),
\end{split}
\]
where $\sigma_{N-1}$ denotes the surface area of $\mathbb{S}^{N-1}$.
Therefore we concentrate here on the upper bound. 

The key of our proof is the following proposition, which when $\gamma = 1$ and $f = u'$ gives \eqref{eq_aizohng7ahf7DaeNgaihaiTh}, and thus yields the desired upper bound for the $p=1$ case of \cref{theorem_Mp} in dimension $N = 1$.

\begin{proposition} \label{weighted1d}
There exists a universal constant $C$ such that for all $\gamma > 0$ and all $f \in C_c(\R)$, we have
\[
\iint_{E(f,\gamma)} |x-y|^{\gamma-1} \dif x \dif y \leq C \frac{5^{\gamma}}{\gamma} \|f\|_{L^1(\R)},
\]
where
\[
E(f,\gamma) := \Big\{(x,y) \in \R \times \R \colon x \ne y, \, \Big|\int_y^x f \Big| \geq |x-y|^{\gamma+1} \Big\}.
\]
\end{proposition}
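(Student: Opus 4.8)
The plan is to reduce the two-dimensional integral estimate to a one-dimensional measure estimate by slicing along the level $|x-y| = t$. Writing $x = c + t/2$, $y = c - t/2$ (and the symmetric case), the set $E(f,\gamma)$ becomes, for each fixed width $t > 0$, the set of centers $c$ for which $\bigl|\int_{c - t/2}^{c+t/2} f\bigr| \geq t^{\gamma+1}$. Thus
\[
\iint_{E(f,\gamma)} |x-y|^{\gamma-1}\dif x \dif y
= 2\int_0^\infty t^{\gamma-1}\, \mathcal{L}^1\Bigl(\Bigl\{ c \in \R \st \Bigl|\int_{c-t/2}^{c+t/2} f\Bigr| \geq t^{\gamma+1}\Bigr\}\Bigr)\dif t,
\]
so everything hinges on controlling the inner measure by a quantity like $\min\{\text{(support length)}, t^{-\gamma}\|f\|_{L^1}\}\cdot(\text{something integrable against }t^{\gamma-1})$.

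The key one-dimensional fact I would isolate is: for $F(c) := \int_{c-t/2}^{c+t/2} f$, one has $\|F\|_{L^1(\R)} \leq t\,\|f\|_{L^1(\R)}$ (Fubini), hence by Chebyshev $\mathcal{L}^1(\{|F|\geq \tau\}) \leq t\|f\|_{L^1}/\tau$; applied with $\tau = t^{\gamma+1}$ this gives the bound $t^{-\gamma}\|f\|_{L^1}$ for the inner measure. This handles large $t$. For small $t$ this bound is useless (it blows up), so there I would use instead a \emph{localization}: if $f$ is supported in an interval $I$, then $F(c)$ can be nonzero only when $(c-t/2,c+t/2)$ meets $I$, i.e.\ for $c$ in an interval of length $|I| + t$, giving inner measure $\leq |I| + t$. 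The integral $\int_0^{\ast} t^{\gamma-1}(|I|+t)\dif t$ over a bounded range is finite but depends on $|I|$ — not allowed. So the honest approach must avoid any a priori bound on the support; one should instead decompose $f = \sum_k f_k$ into pieces on which $\int |f_k|$ is comparable to a dyadic scale, or — cleaner — prove the estimate first for $f \geq 0$, where $\int_y^x f$ is monotone in each variable, then reduce the general case by splitting into $f = f^+ - f^-$ and using $\bigl|\int_y^x f\bigr| \leq \int_y^x f^+ + \int_y^x f^-$ together with the elementary inclusion $\{a + b \geq s\} \subseteq \{a \geq s/2\}\cup\{b \geq s/2\}$; the factor $5^\gamma$ rather than $2^\gamma$ in the statement is a hint that some such dyadic/splitting bookkeeping with a geometric series is involved, and the constant is being tracked crudely.

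For the nonnegative case, I would set $I_c(t) := \int_{c-t/2}^{c+t/2} f \geq 0$, which for fixed $c$ is nondecreasing in $t$ and for fixed $t$ has total mass $\leq t\|f\|_{L^1}$; the set in question is $\{c \st I_c(t) \geq t^{\gamma+1}\}$. I expect to bound its measure by the ``one-dimensional maximal/rising-sun'' geometry: the centers where the average $\frac{1}{t}\int_{c-t/2}^{c+t/2} f \geq t^{\gamma}$ form, by a Vitali-type covering argument at scale $t$, a set of measure at most $C\, t^{-\gamma}\|f\|_{L^1}$ again — but crucially one also gets, from the \emph{same} covering, that these centers lie within distance $O(t)$ of the set where $f$ itself is large, which is what tames the small-$t$ range after integrating in $t$. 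Concretely, I would dyadically decompose the $t$-integral, $\int_0^\infty = \sum_{j\in\Zset}\int_{2^j}^{2^{j+1}}$, apply on each dyadic block the bound $\mathcal{L}^1(\text{inner set}) \leq C\min\{2^{j}\|f\|_\infty\text{-free quantity}\}$ — more precisely use the Chebyshev bound $2^{-j\gamma}\|f\|_{L^1}$ uniformly and then observe that the \emph{sum} $\sum_j 2^{j(\gamma-1)}\cdot 2^j \cdot 2^{-j\gamma}\|f\|_{L^1}$ telescopes to $\sum_j \|f\|_{L^1}$, which diverges — so a raw Chebyshev bound on every scale is \emph{not} enough, confirming that the real content is a gain at one end, supplied by the covering lemma.

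\textbf{Main obstacle.} The hard part, as the authors themselves flag for the $p=1$ endpoint, is precisely that naive Chebyshev/maximal-function estimates are scale-by-scale sharp but not summable in $t$, so the proof cannot be a soft interpolation of two trivial bounds; one genuinely needs the unconventional Vitali covering argument alluded to in the introduction, exploiting that the ``bad'' centers at scale $t$ are geometrically concentrated near a \emph{fixed} (scale-independent) set carrying the $L^1$ mass of $f$, so that the contributions over all dyadic scales can be reorganized into a single $\|f\|_{L^1}$ with only a $1/\gamma$ loss from the resulting geometric sum — this geometric-series step is where the $5^\gamma/\gamma$ arises, and getting the covering to produce a \emph{scale-free} concentration set (rather than one whose size degrades with $t$) is the crux.
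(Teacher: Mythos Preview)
Your diagnosis is accurate but you have not found the proof. You correctly set up the slicing
\[
\iint_{E(f,\gamma)} |x-y|^{\gamma-1}\dif x \dif y
= 2\int_0^\infty t^{\gamma-1}\, \mathcal{L}^1\bigl(\{ c \st I_c(t) \ge t^{\gamma+1}\}\bigr)\dif t,
\]
and you correctly observe that Chebyshev gives $\mathcal{L}^1(\{c \st I_c(t)\ge t^{\gamma+1}\})\le t^{-\gamma}\|f\|_{L^1}$, which is scale-by-scale sharp yet not summable. But your proposed remedy --- a dyadic decomposition in $t$ together with a vague ``scale-free concentration set'' near the mass of $f$ --- is not a proof; you never say what that set is or why the contributions over scales reorganize into a single $\|f\|_{L^1}$. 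The reduction to $f\ge 0$ via $E(f,\gamma)\subseteq E(|f|,\gamma)$ is fine (and simpler than your $f^+/f^-$ splitting), but nothing after that closes.

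The missing idea, which is exactly what the paper does, is to abandon the scale-by-scale viewpoint and apply Vitali \emph{once} to the family of \emph{all} bad intervals simultaneously. Let $X$ be the collection of closed intervals $I\subset\R$ with $\int_I f \ge |I|^{\gamma+1}$; then trivially $E(f,\gamma)\subseteq\bigcup_{I\in X} I\times I$. The lengths in $X$ are bounded by $\|f\|_{L^1}^{1/(\gamma+1)}$, so Vitali yields a pairwise disjoint subfamily $Y\subset X$ with $E(f,\gamma)\subseteq\bigcup_{J\in Y}(5J)\times(5J)$. Now the weighted integral is computed \emph{exactly} on each square:
\[
\iint_{(5J)\times(5J)}|x-y|^{\gamma-1}\dif x\dif y=\frac{10\cdot 5^{\gamma}}{\gamma(\gamma+1)}|J|^{\gamma+1}
\le \frac{10\cdot 5^{\gamma}}{\gamma(\gamma+1)}\int_J f,
\]
the last step because $J\in X$. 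Summing over the disjoint $J\in Y$ gives $C\,5^{\gamma}\gamma^{-1}\|f\|_{L^1}$ immediately. The point you were groping for --- that the bad set at every scale sits over a single ``concentration set'' --- is made precise by the Vitali family $Y$: it is scale-free because $X$ already contains intervals of all lengths, and the miracle is that the two-dimensional weight $|x-y|^{\gamma-1}$ integrated over $J\times J$ produces exactly $|J|^{\gamma+1}$, which the defining inequality of $X$ converts back into $\int_J f$. Your slicing hides this algebraic coincidence; working directly in $\R^2$ with the covering by squares $I\times I$ makes it transparent.
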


\begin{proof}
Since $E(f,\gamma) \subseteq E(|f|,\gamma)$, without loss of generality assume $f$ is non-negative. Let $X$ be the collection of all non-trivial closed intervals $I \subset \R$ such that 
\begin{equation} \label{eq:Icond}
\int_I f \geq |I|^{\gamma+1}.
\end{equation}
(Here an interval is said to be non-trivial if it has positive length, and we used $|I|$ to denote the length of the interval.) Then 
\begin{equation}
E(f,\gamma) \subseteq \bigcup_{I \in X} I \times I.
\end{equation}
The lengths of all intervals in $X$ are bounded by $\|f\|_{L^1}^{1/(\gamma+1)} < \infty$. Hence we may apply the Vitali covering lemma, and choose a subcollection $Y$ of $X$, so that $Y$ consists of a family of pairwise disjoint intervals $J$ from $X$, and every $I \in X$ is contained in $5J$ for some $J \in Y$ (see e.g. \cite{EvansGariepy}, Claim in the proof of Theorem 1, Section 1.5). It follows that
\begin{equation} \label{eq:claim}
E(f,\gamma) \subseteq \bigcup_{J \in Y} (5J) \times (5J),
\end{equation}
where $5J$ is the interval with the same center as $J$ but 5 times the length.
As a result, we see that 
\begin{equation} \label{intENp}
\begin{split}
\iint_{E(f,\gamma)}\hspace{-1em} |x-y|^{\gamma-1} \dif x \dif y 
&\leq \sum_{J \in Y} \iint_{5J \times 5J} |x-y|^{\gamma-1} \dif x \dif y \\
&= \frac{10 \cdot 5^{\gamma}}{\gamma (\gamma + 1)} \sum_{J \in Y} |J|^{\gamma+1}.
\end{split}
\end{equation}
(Here we used $\gamma > 0$ to integrate in $x$ and $y$.)
But for each $J \in Y$, we have $J \in X$, so
\[
|J|^{\gamma+1} \leq \int_J f.
\]
Plugging this back into \eqref{intENp}, we obtain
\begin{equation}
\begin{split}
\iint_{E(f,\gamma)} |x-y|^{\gamma-1} \dif x \dif y 
&\leq C \frac{5^{\gamma}}{\gamma} \sum_{J \in Y} \int_J f\\
&\leq C \frac{5^{\gamma}}{\gamma} \|f\|_{L^1(\R)},
\end{split}
\end{equation}
the last inequality following from the disjointness of the different $J \in Y$. This completes the proof of the proposition.
\end{proof}

To prove the upper bound in \cref{theorem_Mp} when $N > 1$ or $p > 1$, Proposition~\ref{weighted1d} still proves to be useful. Via the method of rotation, it implies the following proposition:

\begin{proposition} \label{setNd}
For any positive integer $N$, there exists a constant $C=C(N)$ such that for all $F \in C_c(\R^N)$, we have
\[
\mathcal{L}^{2N}(E(F)) \leq C \|F\|_{L^1(\R^N)}
\]
where
\[
E(F) := \Big \{ (x,y) \in \R^N \times \R^N \colon x \ne y, \, \Big| \int_y^x F \Big| \geq |x-y|^{N+1} \Big\}.
\]
Here $\int_x^y F$ is the integral of $F$ along the line segment in $\R^N$ connecting $x$ to $y$.
\end{proposition}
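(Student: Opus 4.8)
The plan is to reduce \cref{setNd} to the one-dimensional \cref{weighted1d} applied with $\gamma = N$, via the method of rotation: one parametrizes a pair $(x,y)$, $x \ne y$, by the line through $x$ and $y$ together with the positions of $x$ and $y$ on it. Concretely, I would write $\omega = (x-y)/\abs{x-y} \in \S^{N-1}$ and decompose $x = z + t\omega$, $y = z + s\omega$ with $z \in \omega^{\perp}$ and $s, t \in \R$, $s < t$, so that $t - s = \abs{x-y}$. Polar coordinates in the difference variable $x - y$, followed by an orthogonal splitting of the base point $y = z + (y\cdot\omega)\omega$, then give the change-of-variables formula
\begin{equation} \label{eq_rotation_plan}
\iint\limits_{\R^N \times \R^N} g(x,y) \dif x \dif y = \int_{\S^{N-1}} \int_{\omega^{\perp}} \iint\limits_{s < t} g(z + t\omega, z + s\omega) \, (t - s)^{N-1} \dif s \dif t \dif z \dif \omega
\end{equation}
for every nonnegative measurable $g$, the Jacobian being $(t-s)^{N-1}$.

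Next I would apply \eqref{eq_rotation_plan} with $g = \mathbf{1}_{E(F)}$. For fixed $\omega \in \S^{N-1}$ and $z \in \omega^{\perp}$, set $f_{z,\omega}(r) \defeq F(z + r\omega)$; since $F \in C_c(\R^N)$ and $z \perp \omega$, one checks readily that $f_{z,\omega} \in C_c(\R)$. Along this line the segment integral is $\int_y^x F = \int_s^t f_{z,\omega}$ and $\abs{x-y} = \abs{t-s}$, so the exponent $N+1$ in the definition of $E(F)$ is exactly $\gamma + 1$ with $\gamma = N$, whence
\[
\bigl\{ (s,t) \st s < t, \ (z + t\omega, z + s\omega) \in E(F) \bigr\} \subseteq E(f_{z,\omega}, N),
\]
while the residual Jacobian weight $(t-s)^{N-1}$ matches precisely the weight $\abs{x-y}^{\gamma - 1}$ appearing in \cref{weighted1d}. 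Therefore \cref{weighted1d}, applied with $\gamma = N$ and $f = f_{z,\omega}$, bounds the inner double integral in \eqref{eq_rotation_plan} by $C \frac{5^N}{N} \norm[L^1(\R)]{f_{z,\omega}}$.

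Finally I would integrate in $z \in \omega^{\perp}$ and $\omega \in \S^{N-1}$ and use Tonelli, together with the elementary identities $\int_{\omega^{\perp}} \int_{\R} \abs{F(z + r\omega)} \dif r \dif z = \norm[L^1(\R^N)]{F}$ (an orthogonal change of variables, valid for each fixed $\omega$) and $\int_{\S^{N-1}} \dif \omega = \sigma_{N-1}$, to obtain
\[
\mathcal{L}^{2N}(E(F)) \leq C \frac{5^N}{N} \int_{\S^{N-1}} \int_{\omega^{\perp}} \norm[L^1(\R)]{f_{z,\omega}} \dif z \dif \omega = \frac{C \, 5^N \sigma_{N-1}}{N} \norm[L^1(\R^N)]{F},
\]
which is the claim with $C(N) \defeq C \, 5^N \sigma_{N-1} / N$. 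Since \cref{weighted1d} already carries the real analytic content, the main obstacle is the bookkeeping of the rotation step: justifying \eqref{eq_rotation_plan} with the correct Jacobian $(t-s)^{N-1}$, and checking that $E(F)$ and its line-slices are measurable so that Tonelli applies. The slick point that makes the reduction work so cleanly is that the exponent $N+1$ is $\gamma + 1$ with $\gamma = N$, so that both the slice condition and the leftover Jacobian weight fit the hypotheses of \cref{weighted1d} exactly.
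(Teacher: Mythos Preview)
Your proposal is correct and follows essentially the same route as the paper: a method-of-rotation change of variables (polar coordinates in the difference variable together with the orthogonal splitting $y = z + s\omega$) reduces the $2N$-dimensional measure of $E(F)$ to a $(z,\omega)$-average of the weighted one-dimensional integrals $\iint_{E(f_{z,\omega},N)} \abs{t-s}^{N-1}\,ds\,dt$, to which \cref{weighted1d} with $\gamma = N$ applies verbatim. The only cosmetic differences are that the paper first reduces to $F \ge 0$ and carries out the change of variables in two steps (picking up an explicit factor $\tfrac{1}{2}$ from the symmetry of $E(f_{z,\omega},N)$), whereas you encode everything in the single identity \eqref{eq_rotation_plan}; the resulting constants differ by a harmless factor of~$2$.
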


\begin{proof}
Again without loss of generality, we may assume that $F$ is non-negative. By a change of variable, 
\[
\begin{split}
\mathcal{L}^{2N}(E(F)) 
&= \mathcal{L}^{2N} \Big( \Big\{ (x,y) \in \R^N \times \R^N \colon y \ne 0, \,
\int_0^{|y|} F \Big(x+t \frac{y}{|y|} \Big) \dif t \geq |y|^{N+1} \Big\} \Big) \\
&= \int_{\R^N} \mathcal{L}^{N} \Big( \Big \{y \in \R^N \setminus \{0\} \colon 
\int_0^{|y|} F\Big(x+t \frac{y}{|y|} \Big) \dif t \geq |y|^{N+1} \Big \} \Big) \dif x .
\end{split}
\]
Using polar coordinates to evaluate the integrand, we get
\[
\mathcal{L}^{2N}(E(F)) = \int_{\R^N} \int_{\S^{N-1}} \int_{E(F,x,\omega)} r^{N-1} \dif r \dif \omega \dif x ,
\]
where
\[
E(F,x,\omega) := \Big \{r \in (0,\infty) \colon \int_0^{r} F(x+t \omega) \dif t \geq r^{N+1} \Big\}.
\]
We now use Fubini to interchange the integral over $\R^N$ and $\S^{N-1}$. Then for each $\omega \in \S^{N-1}$, we foliate $\R^N$ as an orthogonal sum $\omega^{\perp} \oplus \R\omega$, where $\omega^{\perp}$ is the subspace of all $x \in \R^N$ that is orthogonal to $\omega$. Hence
\begin{equation} \label{eq:rotation}
\mathcal{L}^{2N}(E(F)) = \int_{\S^{N-1}} \int_{\omega^{\perp}} \int_{\R} \int_{E(F,x'+s\omega,\omega)} r^{N-1} \dif r \dif s \dif x ' \dif \omega.
\end{equation}
We now estimate the inner most double integral. For each fixed $\omega \in \S^{N-1}$ and each $x' \in \omega^{\perp}$, let $f_{\omega,x'} \in C^{\infty}_c(\R)$ be a function of one variable defined by
\[
f_{\omega,x'}(t) = F(x'+t\omega), \quad t \in \R.
\]
Then
\[
E(F,x'+s\omega,\omega)= \Big\{r \in (0,\infty) \colon \int_0^r f_{\omega,x'}(s+t) \dif t \geq r^{N+1} \Big\},
\]
so change of variables again gives
\begin{equation} \label{eq:EFreduced}
 \int_{\R} \int_{E(F,x'+s\omega,\omega)} r^{N-1} \dif r \dif s
= \frac{1}{2} \iint_{E(f_{\omega,x'},N)} |r-s|^{N-1} \dif r \dif s
\end{equation}
where
\begin{equation*}
E(f_{\omega,x'},N)
\defeq  \Big\{ (s,r) \in \R \times \R \colon s \ne r, \, \Big| \int_s^r f_{\omega,x'} \Big| \geq |r-s|^{N+1} \Big\} 
\end{equation*}
as in Proposition~\ref{weighted1d} (the factor $1/2$ accounts for the fact that in the integral on the left hand side of \eqref{eq:EFreduced} we are only working with those $(s,r) \in E(f_{\omega,x'},N)$ with $s < r$). Appealing to Proposition~\ref{weighted1d} with $\gamma = N$, we may now estimate the double integral in the $(r,s)$ variables on the right hand side of \eqref{eq:rotation}. We obtain
\[
\begin{split}
\mathcal{L}^{2N}(E(F)) 
&\leq \frac{C}{2} \cdot \frac{5^N}{N} \int_{\S^{N-1}} \int_{\omega^{\perp}} \int_{\R} f_{\omega,x'}(t) \dif t \dif x ' \dif \omega \\
&= \frac{C}{2} \cdot \frac{5^N  \sigma_{N-1}}{N} \|F\|_{L^1(\R^N)},
\end{split}
\]
the last equality holding because for every $\omega \in \S^{N-1}$,
\[
\int_{\omega^{\perp}} \int_{\R} f_{\omega,x'}(t) \dif t \dif x ' = \int_{\omega^{\perp}} \int_{\R} F(x' + t\omega) \dif t \dif x ' = \|F\|_{L^1(\R^N)}.\qedhere
\]
\end{proof}

The upper bound in \cref{theorem_Mp} follows easily from Proposition~\ref{setNd}. 

\begin{proof}[Proof of \cref{theorem_Mp}, the upper bound]
Since $u \in C^{\infty}_c(\R^N)$, by H\"{o}lder's inequality, for every $1 \leq p < \infty$,
\[
|u(x)-u(y)| \leq \Big| \int_y^x |\nabla u| \Big| \leq |x-y|^{1-\frac{1}{p}} \Big| \int_y^x |\nabla u|^p \Big|^{\frac{1}{p}},
\] 
so for $\lambda > 0$,
\begin{multline}
\label{eq_hainoubaighohB7oonoo3cio}
 \Big\{ (x,y) \in \R^N \times \R^N \colon x \ne y, \, \frac{|u(x)-u(y)|}{|x-y|^{\frac{N}{p}+1}} \geq \lambda \Big\} \\
 \subseteq  \Big\{ (x,y) \in \R^N \times \R^N \colon x \ne y, \, 
 \Big| \int_y^x \frac{|\nabla u|^p}{\lambda^p} \Big| \geq |x-y|^{N+1} \Big\}.
\end{multline}
Applying Proposition~\ref{setNd} to $F := |\nabla u|^p / \lambda^p$, we see that 
\begin{multline*}
 \mathcal{L}^{2N} \Big( \Big\{ (x,y) \in \R^N \times \R^N \colon x \ne y, \, \frac{|u(x)-u(y)|}{|x-y|^{\frac{N}{p}+1}}
\geq \lambda \Big\} \Big) 
\leq  \frac{C}{\lambda^p} \|\nabla u\|_{L^p(\R^N)}^p
\end{multline*}
with $C = C(N)$, as desired. 
\end{proof}
  
\begin{remark}
\label{remark_bojarski}
When \(p > 1\), the upper bound has a short proof relying on an estimate of the difference quotient by the maximal function of the gradient.
The main ingredient is the following so-called Lusin-Lipschitz inequality,
\begin{equation}
  \label{eq_laigieShuyaJiexua0Xahng7}
  \abs{u (x) - u (y)} \le C \abs{x - y} \bigl(\mathcal{M} \abs{\nabla u} (x) + \mathcal{M} \abs{\nabla u} (y) \bigr)\eofs,
\end{equation}
where \(\mathcal{M} f\) denotes the Hardy--Littlewood maximal function of \(f\); see  \cite[p. 404]{Hajlasz_1996} for a complete proof, and \cite{BCD,ABT} for recent developments.
Inequality \eqref{eq_laigieShuyaJiexua0Xahng7} implies that 
\begin{multline}
\label{eq_eiMu0oci5aeR5Owaeregheet}
\biggl\{ (x, y) \in \Rset^N \times \Rset^N \st x \ne y, \, \frac{\abs{u (x) - u (y)}}{\abs{x - y}^{\frac{N}{p} + 1}} \ge \lambda\biggr\}\\
   \shoveleft{\subseteq \Bigl\{(x, y) \in \Rset^N \times \Rset^N \st} 
  \shoveright{\abs{x - y}^\frac{N}{p} \le C \lambda^{-1} \bigl(\mathcal{M} \abs{\nabla u} (x) + \mathcal{M} \abs{\nabla u} (y)\bigr) \Bigr\}}\\
  \subseteq  
  \Bigl\{(x, y) \in \Rset^N \times \Rset^N \st \abs{x - y}^\frac{N}{p} \le 2 C \lambda^{-1} \mathcal{M} \abs{\nabla u} (x) \Bigr\}\\
  \cup \Bigl\{(x, y) \in \Rset^N \times \Rset^N \st \abs{x - y}^\frac{N}{p} \le 2 C \lambda^{-1} \mathcal{M} \abs{\nabla u} (y) \Bigr\}\eofs.
\end{multline}
and thus that 
\begin{multline*}
 \lambda^p 
\mathcal{L}^{2N} \biggl(\biggl\{ (x, y) \in \Rset^N \times \Rset^N \st \frac{\abs{u (x) - u (y)}}{\abs{x - y}^{\frac{N}{p} + 1} }\ge \lambda\biggr\}\biggr) 
  \le  C'(p,N) \int_{\Rset^N} \bigl(\mathcal{M} \abs{\nabla u}\bigr)^p\eofs.
\end{multline*}
For $1 < p < \infty$, the maximal function theorem then implies
\[
\Big [ \frac{u(x)-u(y)}{|x-y|^{\frac{N}{p}+1}} \Big]_{M^p(\R^N \times \R^N)} \leq C(p,N) \|\nabla u\|_{L^p(\R^N)}.
\]
The constant coming the maximal function theorem deteriorates as \(p \searrow 1\).
\end{remark}

\section{Proof of \texorpdfstring{\cref{theorem_reverse}}{Theorem 1.2}} \label{sect:limitseminorm}
 
 We now prove \cref{theorem_reverse} and hence the lower bound in \cref{theorem_Mp}.
 
We will use the inequalities 
\begin{equation} \label{eq:linear}
|u(x) - u(y)| \leq L |x-y| \quad \forall x , y \in \R^N
\end{equation}
with $L := \|\nabla u\|_{L^{\infty}(\R^N)}$ and 
\begin{equation} \label{eq:quadratic}
|u(x) - u(y) - \nabla u(x) \cdot (x-y)| \leq A |x-y|^2 \quad \forall x , y \in \R^N
\end{equation}
with $A := \|\nabla^2 u\|_{L^{\infty}(\R^N)}$. 

Fix $x \in \R^N$ and a direction $\omega \in \S^{N-1}$. For a large positive number $\lambda$, consider the set $E_{\lambda}(x,\omega)$ consisting of all $y \in \R^N$ such that $y-x$ is a positive multiple of $\omega$ and $(x,y) \in E_{\lambda}$. 
We will determine two numbers $\underline{R} = \underline{R}(x,\omega,\lambda)$ and $\overline{R} = \overline{R}(x,\omega,\lambda)$ such that 
\[
\{x + r \omega  \colon r \in (0, \underline{R}]\} \subseteq E_{\lambda}(x,\omega) \subseteq \{x + r \omega \colon r \in (0, \overline{R}]\}.
\]
Using polar coordinates, we then deduce that 
\begin{equation} \label{eq:Exmeas}
\begin{split}
\frac{1}{N} \int_{\S^{N-1}} \hspace{-.8em} \underline{R}(x,\omega,\lambda)^N \dif \omega &\leq \mathcal{L}^N \big( \{y \in \R^N \colon (x,y) \in E_{\lambda} \} \big)\\ 
&\leq \frac{1}{N} \int_{\S^{N-1}} \overline{R}(x,\omega,\lambda)^N \dif \omega.
\end{split}
\end{equation}

From \eqref{eq:quadratic} we have
\[
|u(x)-u(y)| \geq |\nabla u(x) \cdot (x-y)| - A |x-y|^2 \geq \lambda |x-y|^{1+\frac{N}{p}}
\]
provided 
\begin{equation} \label{eq:lowercond}
A r + \lambda r^{N/p} \leq |\nabla u(x) \cdot \omega|
\end{equation}
where $r := |y-x|$ and $\omega = \frac{y-x}{|y-x|} \in \S^{N-1}$. 

Fix $\delta > 0$ arbitrarily small. Then by \eqref{eq:lowercond}, the conditions 
\[
A r \leq \delta |\nabla u (x) \cdot \omega| 
\quad
\text{and} 
\quad
\lambda r^{N/p} \leq (1-\delta) |\nabla u(x) \cdot \omega| 
\]
imply that $(x,y) \in E_{\lambda}$. Thus we can take $\underline{R}$ to be defined by 
\[
\underline{R}(x,\omega,\lambda)^N := \min \Big\{ \frac{\delta^N}{A^N} |\nabla u(x) \cdot \omega|^N, \frac{(1-\delta)^p}{\lambda^p} |\nabla u(x) \cdot \omega|^p \Big\}.
\]
From \eqref{eq:Exmeas} we have
\begin{multline*}
\lambda^p \mathcal{L}^{2N}(E_{\lambda})
\geq \frac{1}{N} \iint   \min \Big\{ \frac{\lambda^p \delta^N}{A^N} |\nabla u(x) \cdot \omega|^N, (1-\delta)^p |\nabla u(x) \cdot \omega|^p \Big\} \dif \omega \dif x,
\end{multline*}
where the integral is over all points \((x,\omega) \in \R^N \times \S^{N-1}\) with \(\nabla u(x) \cdot \omega \ne 0\),
and by monotone convergence,
\[
\liminf_{\lambda \to \infty} \lambda^p \mathcal{L}^{2N}(E_{\lambda}) \geq \frac{(1-\delta)^p}{N} \int_{\R^N} \int_{\S^{N-1}} |\nabla u(x) \cdot \omega|^p \dif \omega \dif x.
\]
Since $\delta > 0$ is arbitrary, we conclude that
\[
\liminf_{\lambda \to \infty} \lambda^p \mathcal{L}^{2N}(E_{\lambda}) \geq \frac{k(p,N)}{N}  \int_{\R^N} |\nabla u(x)|^p \dif x
\]
where $k(p,N)$ is defined by \eqref{eq:kpN}.

It remains to establish that
\begin{equation} \label{eq:limsupE}
\limsup_{\lambda \to \infty} \lambda^p \mathcal{L}^{2N}(E_{\lambda}) \leq \frac{k(p,N)}{N}  \int_{\R^N} |\nabla u(x)|^p \dif x.
\end{equation}
From \eqref{eq:quadratic} we have
\[
|u(x) - u(y)| \leq |\nabla u(x) \cdot (x-y)| + A |x-y|^2
\]
and thus if $(x,y) \in E_{\lambda}$ we obtain
\begin{equation} \label{eq:A}
\lambda r^{N/p} \leq |\nabla u(x) \cdot \omega| + A r
\end{equation}
where again $r = |y-x|$ and $\omega = \frac{y-x}{|y-x|} \in \S^{N-1}$. On the other hand, if $(x,y) \in E_{\lambda}$, we have from \eqref{eq:linear} that
\begin{equation} \label{eq:L}
\lambda r^{N/p} \leq L.
\end{equation}
Inserting \eqref{eq:L} into \eqref{eq:A} yields 
\begin{equation} \label{eq:rhoupper}
\lambda r^{N/p} \leq |\nabla u(x) \cdot \omega| + A \Big( \frac{L}{\lambda} \Big)^{p/N}.
\end{equation}
In what follows we will consider only 
\begin{equation} \label{eq:lambdalarge}
\lambda > L.
\end{equation} 
Observe that if $\text{dist}(x, \text{supp}\, u) > 1$ then
\begin{equation} \label{eq:Exempty}
\{y \in \R^N \colon (x,y) \in E_{\lambda} \} = \emptyset.
\end{equation}
Indeed by \eqref{eq:L} and \eqref{eq:lambdalarge} we have, for any $(x,y) \in E_{\lambda}$, that $|x-y| \leq 1$. So if $\text{dist}(x, \text{supp}\, u) > 1$ and $y \in \R^N$ is such that $(x,y) \in E_{\lambda}$, then $y \notin \text{supp}\, u$, from which it follows that $\lambda |x-y|^{\frac{N}{p} + 1} \leq |u(x)-u(y)| = 0$, i.e. $x = y$, which is a contradiction since $(x,x) \notin E_{\lambda}$. 

Using \eqref{eq:rhoupper} and \eqref{eq:Exempty} we may take $\overline{R}$ to be
\begin{equation*}
\overline{R}(x,\omega,\lambda)^N
= 
\begin{cases}
\Big( \frac{\displaystyle |\nabla u(x) \cdot \omega| + A \big( \tfrac{L}{\lambda} \big)^{p/N}}{\lambda} \Big)^p &\quad \text{if $\text{dist}(x,\text{supp}\, u) \leq 1$} \\
0 &\quad \text{otherwise}.
\end{cases}
\end{equation*}
Consequently from \eqref{eq:Exmeas}
\begin{multline}
\lambda^p \mathcal{L}^{2N}(E_{\lambda}) 
\leq  \frac{1}{N} \int_{\R^N} \int_{\S^{N-1}} \mathbf{1}_{\text{dist}(x,\text{supp}\, u) \leq 1}  \Big( |\nabla u(x) \cdot \omega| + A \big( \tfrac{L}{\lambda} \big)^{p/N} \Big)^p \dif \omega \dif x
\end{multline}
which yields \eqref{eq:limsupE} by dominated convergence. \qed

\section{Fixing a ``defect'' of a fractional Sobolev-type estimate}

A typical fractional Sobolev-type estimate would assert that 
\begin{equation*}
 \dot{W}^{1, 1} (\Rset^N) \subset W^{s, p} (\Rset^N), \qquad \text{with continuous injection},
\end{equation*}
for every \(N \ge 1\) and every \(0 < s < 1\), where \(1 < p <\infty\) is defined by 
\begin{equation}
 \frac{1}{p} = 1 - \frac{1 - s}{N}\eofs.
\end{equation}
This amounts to 
\begin{equation}
  \label{eq_shaiphoBiephoum3ez2uizee}
  \Biggnorm[L^p (\Rset^N \times \Rset^N)]
  {\frac{u (x) - u (y)}{\abs{x - y}^{\frac{N}{p} + s}}}
  \hspace{-1em}
  \le C \norm[L^1  (\Rset^N)] {\nabla u}\eofs, \quad 
  \forall u \in C^\infty_c (\Rset^N)\eofs.
\end{equation}
It turns out that \eqref{eq_shaiphoBiephoum3ez2uizee} holds when \(N \ge 2\) but fails when \(N = 1\). (Estimate \eqref{eq_shaiphoBiephoum3ez2uizee} when \(N \ge 2\) is due to Solonnikov \cite{Solonnikov_1972}; see also \cite[Appendix D]{Bourgain_Brezis_Mironescu_2004} for a proof when \(N = 2\) which can be adapted to  any \(N \ge 2\) and \cite[Corollary 8.2]{VanSchaftingen_2013} for a proof based on cancellation properties of gradients in endpoint estimates \cite{Bourgain_Brezis_2007}.)
When \(N = 1\), \eqref{eq_shaiphoBiephoum3ez2uizee} reads as 
\begin{equation}
\label{eq_ro4Nahngohpeajahje6Aagh7}
  \Biggnorm[L^p (\Rset \times \Rset)]
  {\frac{u (x) - u (y)}{\abs{x - y}^{\frac{2}{p}}}}
  \le C \norm[L^1  (\Rset)] {u'}\eofs\eofs, \quad 
  \forall u \in C^\infty_c (\Rset)\eofs,
\end{equation}
which clearly fails for any \(p \in [1, \infty)\).
Indeed, take \(u = u_n\), a sequence of smooth functions converging to the characteristic function \(\mathbf{1}_I\) of a bounded interval \(I \subset \Rset\); note that the right-hand side of \eqref{eq_ro4Nahngohpeajahje6Aagh7} remains bounded while its left-hand side tends to infinity.
When \(p = 1\), the failure of \eqref{eq_ro4Nahngohpeajahje6Aagh7} is even more dramatic: the left-hand side is infinite for any measurable function \(u\) unless \(u\) is a constant, as mentioned in \eqref{eq_xoh6aiwef2eikie7thahJ8Ee}.

One way to repair the defect in \eqref{eq_shaiphoBiephoum3ez2uizee} when $N = 1$ consists of using again weak~$L^p$ instead of strong~$L^p$.

\begin{corollary}
\label{theorem_W11_Wsp}
There exists a constant \(C\) such that for every \(1 < p < \infty\), 
\begin{equation}
  \label{eq_Queewohz0oozoghie9rohjai}
    \Biggquasinorm[M^p (\Rset \times \Rset)]{\frac{u (x) - u (y)}{\abs{x - y}^{\frac{2}{p}}}} 
    \le 
    C
    \,
    \norm[L^1 (\Rset)]{u'}\eofs,\quad \forall u \in C^\infty_c (\Rset)\eofs.
\end{equation}
\end{corollary}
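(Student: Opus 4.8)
The plan is to derive \cref{theorem_W11_Wsp} as a direct consequence of \cref{theorem_Mp} applied at the exponent $p=1$ in dimension $N=1$. The starting point is the observation that, by the fundamental theorem of calculus and H\"older's inequality exactly as in the proof of the upper bound in \cref{theorem_Mp}, for $u \in C^\infty_c(\R)$ and $1 < p < \infty$ one has $\abs{u(x)-u(y)} \le \abs{x-y}^{1-\frac1p}\bigabs{\int_y^x \abs{u'}}^{1/p}$, so that
\[
\biggl\{(x,y)\in\R\times\R \st x\ne y,\ \frac{\abs{u(x)-u(y)}}{\abs{x-y}^{2/p}}\ge\lambda\biggr\}
\subseteq
\biggl\{(x,y)\in\R\times\R \st x\ne y,\ \Bigabs{\int_y^x \frac{\abs{u'}}{\lambda}}\ge\abs{x-y}^{2}\biggr\}.
\]
The set on the right is precisely $E(f,1)$ from \cref{weighted1d} with $f = \abs{u'}/\lambda$ (equivalently, $E(F)$ from \cref{setNd} with $N=1$ and $F=\abs{u'}/\lambda$), up to the harmless removal of the diagonal.

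Next I would invoke \cref{setNd} with $N=1$ (or equivalently \cref{weighted1d} with $\gamma=1$, which gives a clean universal constant $C$ after noting $5^1/1 = 5$). This yields
\[
\mathcal{L}^{2}\biggl(\biggl\{(x,y)\in\R\times\R \st x\ne y,\ \frac{\abs{u(x)-u(y)}}{\abs{x-y}^{2/p}}\ge\lambda\biggr\}\biggr)
\le \mathcal{L}^2\bigl(E(\abs{u'}/\lambda)\bigr) \le \frac{C}{\lambda}\,\norm[L^1(\R)]{u'}.
\]
Multiplying through by $\lambda^p$ does \emph{not} immediately give what is wanted, because the measure only decays like $\lambda^{-1}$, not $\lambda^{-p}$; so one must also exploit the competing bound coming from $u \in L^\infty$, namely $\abs{u(x)-u(y)}\le \norm[L^\infty]{u'}\abs{x-y}$, which forces $\abs{x-y} \le (\norm[L^\infty]{u'}/\lambda)^{p/2}$ on the superlevel set, or more simply one argues directly. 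Actually the cleanest route avoids $L^\infty$ entirely: since $1-\frac1p$ and the exponent work out, one should redo the H\"older step keeping track of the power. Rather, the intended mechanism is that the inclusion above combined with \cref{setNd} already gives, for \emph{every} $\lambda$, a bound on the measure by $C\lambda^{-1}\norm[L^1]{u'}$ — but to get the $M^p$ quasinorm one instead compares directly: write $\abs{u(x)-u(y)}^p \le \abs{x-y}^{p-1}\bigabs{\int_y^x\abs{u'}^p}$ is \emph{not} available since we only control $\norm[L^1]{u'}$, so the genuine argument must be: apply \cref{setNd} to $F = \abs{u'}/\lambda$ to control $\{\abs{\int_y^x F}\ge\abs{x-y}^{2}\}$, and verify by a scaling check that this set contains the level set of $\abs{u(x)-u(y)}/\abs{x-y}^{2/p}$ at height $\lambda$ precisely when one uses the H\"older inequality with the single power $\abs{u(x)-u(y)}\le\bigabs{\int_y^x\abs{u'}}^{1-\frac1p}\bigl(\int_y^x\abs{u'}\cdot\mathbf 1\bigr)^{?}$; the correct and simplest statement is that $\abs{u(x)-u(y)} \le \bigabs{\int_y^x \abs{u'}}$ together with $\abs{u(x)-u(y)} \le \abs{x-y}\,\norm[L^\infty]{u'}$, whence interpolating, $\abs{u(x)-u(y)} \le \abs{x-y}^{1-1/p}\bigabs{\int_y^x\abs{u'}}^{1/p}$, and this is exactly the inclusion displayed, so that $\lambda^p\mathcal{L}^2(\text{level set}) \le \lambda^p\mathcal{L}^2(E(\abs{u'}/\lambda)) \le \lambda^p\cdot C\lambda^{-1}\norm[L^1]{u'} = C\lambda^{p-1}\norm[L^1]{u'}$, which blows up — so in fact the right substitution is $F = (\abs{u'}/\lambda)^{\text{something}}$; I will track the correct power so that $E(F)$ has measure decaying like $\lambda^{-p}$, namely take $F$ so that $\bigabs{\int_y^x F}\ge\abs{x-y}^{2}$ matches the level set, which forces $F = (\abs{u'}/\lambda^{?})$ with $\norm[L^1]{F} = \lambda^{-?}\norm[L^1]{u'}$ and $? = p$ after squaring appropriately; concretely, from $\abs{u(x)-u(y)}/\abs{x-y}^{2/p}\ge\lambda$ and $\abs{u(x)-u(y)}\le\bigabs{\int_y^x\abs{u'}}$ we get $\bigabs{\int_y^x\abs{u'}}\ge\lambda\abs{x-y}^{2/p}$, which is \emph{not} of the form $\abs{x-y}^{2}$ unless $p=1$; to reach the form needed for \cref{setNd} one raises to the power $p$: $\bigabs{\int_y^x\abs{u'}}^p\ge\lambda^p\abs{x-y}^{2}$, and since $\bigabs{\int_y^x\abs{u'}}^p \le \abs{x-y}^{p-1}\int_y^x\abs{u'}^p$ is the wrong direction, one instead uses $\bigabs{\int_y^x\abs{u'}}^{p} \le \bigl(\norm[L^1]{u'}\bigr)^{p-1}\bigabs{\int_y^x\abs{u'}}$, giving $\bigabs{\int_y^x \frac{\abs{u'}}{\lambda^p/\norm[L^1]{u'}^{p-1}}}\ge\abs{x-y}^{2}$, i.e.\ the level set sits inside $E(F)$ with $F = \norm[L^1]{u'}^{p-1}\abs{u'}/\lambda^p$ and $\norm[L^1(\R)]{F} = \norm[L^1]{u'}^{p}/\lambda^{p}$; then \cref{setNd} (with $N=1$) yields $\mathcal{L}^2(\text{level set}) \le C\norm[L^1]{u'}^{p}/\lambda^{p}$, and multiplying by $\lambda^p$ and taking the supremum over $\lambda$ gives $\Biggquasinorm[M^p(\R\times\R)]{\frac{u(x)-u(y)}{\abs{x-y}^{2/p}}}\le C^{1/p}\norm[L^1(\R)]{u'}$, with $C$ the universal constant from \cref{weighted1d} (using $5^1/1=5$), hence independent of $p$ as claimed.

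The main obstacle — and the only genuinely delicate point — is precisely this bookkeeping with exponents: one must linearize the power $\bigabs{\int_y^x\abs{u'}}^{p}$ using the crude but correct bound $t^{p} \le \norm[L^1]{u'}^{p-1} t$ valid for $0 \le t \le \norm[L^1]{u'}$ (which holds here since $\int_y^x\abs{u'} \le \norm[L^1]{u'}$), so as to land exactly in the scale-invariant setting of \cref{setNd} with a right-hand side that scales like $\lambda^{-p}$. Once this is in place, the constant in \eqref{eq_Queewohz0oozoghie9rohjai} is $C^{1/p}$ where $C$ is the universal constant of \cref{setNd} at $N=1$; since $C^{1/p} \le \max\{1,C\}$ for all $p \ge 1$, one obtains a single constant $C$ independent of $p$, which is the content of the corollary. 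No maximal-function machinery is needed here — this is one of the advertised advantages of working through \cref{weighted1d} rather than \cref{remark_bojarski}, and it is also why the endpoint behavior as $p \searrow 1$ causes no trouble.
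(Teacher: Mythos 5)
Your final argument is correct and is the route the paper points to: reduce to \cref{setNd} at $N=1$ (equivalently, \cref{theorem_Mp} at $p=1$) via the linearization $t^p\le\norm[L^1(\R)]{u'}^{p-1}\,t$ for $0\le t\le\norm[L^1(\R)]{u'}$, applied with $t=\bigabs{\int_y^x\abs{u'}}$, which converts the superlevel set at height $\lambda$ into $\{\bigabs{\int_y^x F}\ge\abs{x-y}^2\}$ with $\norm[L^1(\R)]{F}=\norm[L^1(\R)]{u'}^p/\lambda^p$, giving the correct $\lambda^{-p}$ decay; and $C^{1/p}\le\max\{1,C\}$ yields a $p$-independent constant, as required.

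However, the H\"older bound you display at the outset, $\abs{u(x)-u(y)}\le\abs{x-y}^{1-1/p}\bigabs{\int_y^x\abs{u'}}^{1/p}$, is false as stated: H\"older would put $\abs{u'}^p$ under the integral, and the ``interpolation'' you later invoke between the FTC bound and the Lipschitz bound actually produces an extra factor $\norm[L^\infty(\R)]{u'}^{1-1/p}$, which is not controlled by $\norm[L^1(\R)]{u'}$. That step, together with the other abandoned attempts you leave on the page (the $F=\abs{u'}/\lambda$ attempt that gives only $\lambda^{p-1}$, the $L^\infty$ detour), plays no role in the final chain and should be deleted; keep only the argument from ``concretely, from $\abs{u(x)-u(y)}/\abs{x-y}^{2/p}\ge\lambda$ \dots'' onward, which is the complete and correct proof.
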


\begin{remark}
When $p = 2$ estimate \eqref{eq_Queewohz0oozoghie9rohjai} is originally due to Greco and Schiattarella \cite{Greco_Schiattarella}.
\end{remark}



\section{Fixing a ``defect'' of some fractional Gagliardo-Nirenberg-type estimates}

We first consider a Gagliardo--Nirenberg-type inequality involving \(\dot{W}^{1, 1} (\Rset^N)\) and \(L^{p_1} (\Rset^N)\) with \(N \ge 1\) and \(1 \le p_1 \le \infty\).

Let \(\theta \in (0, 1)\) and set 
\begin{equation}
\label{eq_Am1iupeeku1apheiyie7Yuqu}
\begin{split}
  s &= \theta \cdot 0 + (1 - \theta) \cdot 1 = 1 - \theta 
\\
  \frac{1}{p} & = \frac{\theta}{p_1} + \frac{1 - \theta}{1}
  = \frac{\theta}{p_1} + (1 - \theta)\eofs.
\end{split}
\end{equation}
It is known that the estimate 
\begin{equation}
\begin{split}
  \label{eq_xieLichahZah4eetoof9lei2}
  \seminorm[W^{s, p} (\Rset^N)]{u}
  &=
  \Biggnorm[L^p (\Rset^N \times \Rset^N)]{\frac{u (x) - u (y)}{\abs{x - y}^{\frac{N}{p} + s}}}\\
  &
  \le 
  C \norm[L^{p_1} (\Rset^N)]{u}^\theta \norm[L^1 (\Rset^N)]{\nabla u}^{1 - \theta}
  \end{split}
\end{equation}
\begin{itemize}
\item 
\emph{holds} for every \(\theta \in (0, 1)\) when \(1 \le p_1 < \infty\); and
\item 
\emph{fails} for every \(\theta \in (0, 1)\) when \(p_1 = \infty\), 
\end{itemize}\parshape=0
see e.g.\ Brezis--Mironescu \cite{Brezis_Mironescu_2018} and the references therein.

We investigate here what happens when \(p_1 = \infty\) and the ``anticipated'' inequality 
\begin{equation}
  \Biggnorm[L^p (\Rset^N \times \Rset^N)]
  {\frac{u (x) - u (y)}{\abs{x - y}^\frac{N + 1}{p}}}
  \le C \norm[L^\infty  (\Rset^N)] {u}^{1 - 1/p}
  \norm[L^{1} (\Rset^N)]{\nabla u}^{1/p}\eofs,
\end{equation}
for \(u \in C^\infty_c (\Rset^N)\), fails for every \(1 \le p < \infty\). (The argument is the same as above for the failure of \eqref{eq_ro4Nahngohpeajahje6Aagh7}.)

Our main result in this direction is 
\begin{corollary}
  \label{theorem_intro}
For every $N \geq 1$, there exists a constant \(C= C(N)\) such that for all \(1 < p <\infty\) and all \(u \in C^\infty_c (\Rset^N)\),
\begin{equation} \label{eq:Cor1.5}
\Biggquasinorm[M^p (\Rset^N \times \Rset^N)]
{\frac{u (x) - u (y)}{\abs{x - y}^\frac{N + 1}{p}}}\le C \norm[L^\infty  (\Rset^N)]{u}^{1 - 1/p}
\norm[L^1 (\Rset^N)]{\nabla u}^{1/p}\eofs.
\end{equation}
\end{corollary}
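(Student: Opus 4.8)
The plan is to derive \cref{theorem_intro} from \cref{setNd} (equivalently, from the case \(p = 1\) of \cref{theorem_Mp}) by a one‑line interpolation trick. Unwinding the definition of the Marcinkiewicz quasi‑norm, it suffices to produce a constant \(C = C(N)\) such that for every \(1 < p < \infty\), every \(u \in C^\infty_c(\R^N)\) and every \(\lambda > 0\),
\[
\lambda^p\, \mathcal{L}^{2N}\Bigl(\Bigl\{(x,y) \in \R^N \times \R^N \st x \ne y,\ \tfrac{|u(x)-u(y)|}{|x-y|^{(N+1)/p}} \ge \lambda\Bigr\}\Bigr) \le C\, \|u\|_{L^\infty(\R^N)}^{p-1}\, \|\nabla u\|_{L^1(\R^N)}.
\]
Taking the supremum over \(\lambda > 0\) and then extracting \(p\)-th roots yields \eqref{eq:Cor1.5}, provided the constant one obtains after the root extraction is bounded independently of \(p \in (1,\infty)\); I come back to this point at the end.

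First I would record the elementary pointwise estimate
\[
|u(x)-u(y)|^p = |u(x)-u(y)|^{p-1}\,|u(x)-u(y)| \le \bigl(2\|u\|_{L^\infty(\R^N)}\bigr)^{p-1}\Bigl|\int_y^x |\nabla u|\Bigr|,
\]
where the first factor is bounded crudely by \((2\|u\|_{L^\infty})^{p-1}\) and the second by the line integral of \(|\nabla u|\) along the segment joining \(y\) and \(x\) (fundamental theorem of calculus). Hence, setting
\[
F \defeq \frac{\bigl(2\|u\|_{L^\infty(\R^N)}\bigr)^{p-1}}{\lambda^p}\, |\nabla u| \in C_c(\R^N),
\]
the condition \(|u(x)-u(y)| \ge \lambda |x-y|^{(N+1)/p}\) forces \(\bigl|\int_y^x F\bigr| \ge |x-y|^{N+1}\), i.e.\ \((x,y) \in E(F)\) in the notation of \cref{setNd}. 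Thus the superlevel set above is contained in \(E(F)\).

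It then remains to apply \cref{setNd}: \(\mathcal{L}^{2N}(E(F)) \le C(N)\|F\|_{L^1(\R^N)} = C(N)\,\lambda^{-p}\,(2\|u\|_{L^\infty(\R^N)})^{p-1}\|\nabla u\|_{L^1(\R^N)}\), which is precisely the displayed weak‑type bound (with constant \(2^{p-1}C(N)\)). There is no genuine obstacle left here — all the work has been absorbed into \cref{setNd}/\cref{theorem_Mp} — and the only thing that needs a moment's attention is the dependence of the final constant on \(p\): after taking \(p\)-th roots the constant is \((2^{p-1}C(N))^{1/p} = C(N)^{1/p}\,2^{1-1/p}\), which is at most \(2\max\{C(N),1\}\) for all \(p \ge 1\), so it can be chosen uniformly in \(p\). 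The companion statement \cref{theorem_W11_Wsp} is obtained in exactly the same way, simply replacing the crude bound \(|u(x)-u(y)| \le 2\|u\|_{L^\infty}\) in the first factor by \(|u(x)-u(y)| \le \|u'\|_{L^1(\R)}\) and using \cref{weighted1d} with \(\gamma = 1\) in place of \cref{setNd}.
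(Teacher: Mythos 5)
Your proof is correct and follows exactly the route the paper sketches (the authors state that these corollaries ``can all be derived as immediate consequences of Theorem~\ref{theorem_Mp} applied with $p = 1$'' and defer details to a forthcoming article). The key step --- the pointwise bound $|u(x)-u(y)|^p \le (2\|u\|_{L^\infty})^{p-1}\,\bigl|\int_y^x|\nabla u|\bigr|$, which places the superlevel set of $|u(x)-u(y)|/|x-y|^{(N+1)/p}$ inside $E(F)$ with $F = (2\|u\|_{L^\infty})^{p-1}\lambda^{-p}|\nabla u|$ and invokes \cref{setNd} --- is precisely the $p=1$ mechanism the paper has in mind, and your check that $(2^{p-1}C(N))^{1/p}$ stays bounded uniformly in $p$ closes the argument cleanly.
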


Finally, we turn to another situation, also involving \(\dot{W}^{1, 1}\), where the Gagliardo--Nirenberg-type inequality fails. 
Let \(0 < s_1 < 1\), \(1 < p_1 < \infty\) and \(0 < \theta < 1\).
Set
\begin{align}
  \label{eq_Ahqueep9eci9aikio5eequai}
  s &= \theta s_1 + (1 - \theta) &
  &\text{ and }&
  \frac{1}{p} & = \frac{\theta}{p_1} + (1 - \theta)\eofs.
\end{align}
It is known that  the estimate 
\begin{equation}
  \label{eq_chuofah2shaiZiemohqu5eej}
  \begin{split}
  \seminorm[W^{s, p} (\Rset^N)]{u}
  =
  \Biggnorm[L^p (\Rset^N \times \Rset^N)]{\frac{u (x) - u (y)}{\abs{x - y}^{\frac{N}{p} + s}}}
  \le 
  C \seminorm[W^{s_1, p_1} (\Rset^N)]{u}^\theta \norm[L^1 (\Rset^N)]{\nabla u}^{1 - \theta}
  \end{split}
\end{equation}
\begin{itemize}
\item 
\emph{holds} for every \(\theta \in (0, 1)\) when \(s_1 p_1 < 1\) (Cohen, Dahmen, Daubechies and DeVore \cite{Cohen_Dahmen_Daubechies_DeVore_2003}), and 
\item 
\emph{fails} for every \(\theta \in (0, 1)\) when \(s_1 p_1 \ge 1\) (Brezis and Mironescu \cite{Brezis_Mironescu_2018}). 
\end{itemize}
We investigate here what happens in the regime \(s_1 p_1 \ge 1\).
Our main result in this direction is

\begin{corollary}
  \label{theorem_intro_2}
For every $N \geq 1$, there exists a constant \(C = C(N)\) such that for any \(s_1 \in (0, 1)\), \(p_1 \in (1, \infty)\) with \(s_1 p_1 \ge 1\) and for any \(\theta \in (0, 1)\), we have 
  \begin{multline} \label{GN_Wsp}
    \Biggquasinorm[M^p (\Rset^N \times \Rset^N)]{\frac{u (x) - u (y)}{\abs{x - y}^{\frac{N}{p} + s}}}
    \le  C \seminorm[W^{s_1, p_1} (\Rset^N)]{u}^\theta \norm[L^1 (\Rset^N)]{\nabla u}^{1 - \theta}\eofs,
    \qquad \forall u \in C^\infty_c (\Rset^N) \eofs 
  \end{multline}  
where \(0 < s <1\) and \(1 < p<\infty\) are defined by \eqref{eq_Ahqueep9eci9aikio5eequai}.
\end{corollary}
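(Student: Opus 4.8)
The plan is to reduce \eqref{GN_Wsp} to the case $p=1$ of \cref{theorem_Mp} by a Gagliardo--Nirenberg-type pointwise decomposition of the difference quotient. Fix $u \in C^\infty_c(\R^N)$, and write $t := \abs{x-y}$. The basic observation is that the single quantity $\abs{u(x)-u(y)}$ can be estimated in two competing ways: on the one hand, H\"older along the segment from $y$ to $x$ gives the ``order-$1$'' bound $\abs{u(x)-u(y)} \le \bigabs{\int_y^x \abs{\nabla u}}$, and on the other hand the $W^{s_1,p_1}$ seminorm controls a ``fractional'' average of the difference quotient. The goal is to combine these so that, after raising to appropriate powers dictated by \eqref{eq_Ahqueep9eci9aikio5eequai}, the level set $\{\abs{u(x)-u(y)} \ge \lambda t^{N/p+s}\}$ is covered by a level set of the type handled in \cref{setNd} applied to $F = \abs{\nabla u}$, up to an error governed by the $W^{s_1,p_1}$ seminorm which has a finite (strong) $L^{p_1}$-type integrability that can be fed into a weak-type estimate.

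Concretely, I would argue as follows. On $E_\lambda := \{(x,y): \abs{u(x)-u(y)} \ge \lambda t^{N/p+s}\}$, split according to whether the ``order-$1$'' part or the ``fractional'' part dominates. With the exponent relations $s = \theta s_1 + (1-\theta)$ and $1/p = \theta/p_1 + (1-\theta)$, interpolating the pointwise inequalities produces, on $E_\lambda$, an estimate of the shape
\[
\lambda t^{\frac{N}{p}+s} \le \abs{u(x)-u(y)} \le \Bigl(\tfrac{1}{t}\bigabs{\textstyle\int_y^x \abs{\nabla u}}\Bigr)^{1-\theta}\!\! \bigl(g(x,y)\bigr)^{\theta},
\]
where $g(x,y) := \abs{u(x)-u(y)}/t^{N/p_1 + s_1}$ is exactly the integrand whose $L^{p_1}(\R^N\times\R^N)$ norm is $\seminorm[W^{s_1,p_1}]{u}$. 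Rearranging and raising to the power $1/(1-\theta)$, one finds that $E_\lambda$ is contained in a union of two sets: one on which $\bigabs{\int_y^x \abs{\nabla u}} \ge c\,\lambda^{?}\, t^{N+1}$ (after absorbing the power of $t$ correctly using $\frac{N/p+s - \theta(N/p_1+s_1)}{1-\theta} = N+1$, which is precisely forced by \eqref{eq_Ahqueep9eci9aikio5eequai}), and one on which $g(x,y)$ is large. The first set is measured by \cref{setNd} with $F = \abs{\nabla u}/(\text{const}\cdot\lambda)^{\alpha}$, contributing $C\lambda^{-p}\norm[L^1]{\nabla u}^{p(1-\theta)}\cdot(\dots)$; the second is measured by Chebyshev/weak-type in $L^{p_1}$, contributing a power of $\seminorm[W^{s_1,p_1}]{u}$. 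Optimizing the split point in $\lambda$ (i.e.\ balancing the two contributions, which is where $\theta$ enters the final homogeneity) yields $\lambda^p \mathcal{L}^{2N}(E_\lambda) \le C \seminorm[W^{s_1,p_1}]{u}^{p\theta}\norm[L^1]{\nabla u}^{p(1-\theta)}$, i.e.\ \eqref{GN_Wsp}.

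I expect the main obstacle to be bookkeeping of the exponents: one must check that the algebraic identities coming from \eqref{eq_Ahqueep9eci9aikio5eequai} make the power of $t$ collapse to exactly $N+1$ in the gradient term (so that \cref{setNd} applies verbatim) and simultaneously make the scaling in $\lambda$ come out to the claimed $\lambda^{-p}$ after the two-term split is optimized. A secondary subtlety is that \cref{setNd} is stated for $F \in C_c(\R^N)$ and a \emph{fixed} exponent $N+1$ on $\abs{x-y}$, so the reduction must genuinely land at that exponent rather than an $\lambda$-dependent one; this is guaranteed by the choice $p_1 > 1$ together with the interpolation identities, but it is worth isolating as a lemma. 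The constraint $s_1 p_1 \ge 1$ should enter only to guarantee $0 < s < 1$ and $p < \infty$ (so that the left-hand side is a bona fide $M^p$ quasi-norm on $\R^N \times \R^N$), and does not otherwise affect the argument. Finally, one should note that, just as in the derivation of \cref{theorem_W11_Wsp} and \cref{theorem_intro} from \cref{theorem_Mp}, the right-hand side depends on $u$ only through quantities that are finite for $u \in C^\infty_c(\R^N)$, so no additional density or approximation step is needed.
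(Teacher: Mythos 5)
The paper does not prove this corollary; it merely records that it follows from \cref{theorem_Mp} and defers the details to a forthcoming article, so your proposal cannot be matched against a written proof. It is, however, a correct realization of the stated plan. The pointwise decomposition on $E_\lambda$ should read
\[
  \lambda \le \frac{|u(x)-u(y)|}{|x-y|^{\frac{N}{p}+s}}
  \le \biggl(\frac{1}{|x-y|^{N+1}}\,\Bigl|\int_y^x |\nabla u|\Bigr|\biggr)^{1-\theta}
  \biggl(\frac{|u(x)-u(y)|}{|x-y|^{\frac{N}{p_1}+s_1}}\biggr)^{\theta},
\]
not with $1/|x-y|$ as in your display; it follows from $|u(x)-u(y)|\le|u(x)-u(y)|^{1-\theta}|u(x)-u(y)|^{\theta}$, the fundamental theorem of calculus, and the identities \eqref{eq_Ahqueep9eci9aikio5eequai}, and in particular the exponent $N+1$ on the gradient factor is forced purely by \eqref{eq_Ahqueep9eci9aikio5eequai} and does \emph{not} depend on $\lambda$ or on $p_1>1$, contrary to the caveat at the end of your writeup. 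For each threshold $T>0$, this places $E_\lambda$ inside the union of $\bigl\{\bigl|\int_y^x|\nabla u|\bigr|\ge T|x-y|^{N+1}\bigr\}$, of measure $\le C(N)\|\nabla u\|_{L^1}/T$ by Proposition~\ref{setNd} with $F=|\nabla u|/T$, and of a set whose measure is $\le |u|_{W^{s_1,p_1}}^{p_1}(T^{1-\theta}/\lambda)^{p_1/\theta}$ by Chebyshev. Minimizing over $T$ and using $\theta+p_1(1-\theta)=p_1/p$ gives $\lambda^p\mathcal{L}^{2N}(E_\lambda)\le C_\alpha\,C(N)^{p(1-\theta)}\|\nabla u\|_{L^1}^{p(1-\theta)}|u|_{W^{s_1,p_1}}^{p\theta}$ with $\alpha=p_1(1-\theta)/\theta$. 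The one bookkeeping point worth flagging: the corollary asserts $C=C(N)$ uniformly in $s_1,p_1,\theta$, so you must check that the optimization constant $C_\alpha=(\alpha+1)\alpha^{-\alpha/(\alpha+1)}$ is bounded over $\alpha\in(0,\infty)$; it is (maximum $2$ at $\alpha=1$), and since $p>1$ the $p$-th root of the overall constant depends only on $N$. Also, $s_1p_1\ge1$ plays no role in the estimate itself; it only delimits the regime where the strong bound \eqref{eq_chuofah2shaiZiemohqu5eej} fails and a weak-type substitute is of interest. Finally, a more compact packaging of the same idea: setting $A=(u(x)-u(y))/|x-y|^{N+1}$ and $B=(u(x)-u(y))/|x-y|^{N/p_1+s_1}$, one has $|u(x)-u(y)|/|x-y|^{N/p+s}=|A|^{1-\theta}|B|^{\theta}$, with $\quasinorm[M^1]{A}\le C(N)\|\nabla u\|_{L^1}$ by the $p=1$ case of \cref{theorem_Mp} and $\|B\|_{L^{p_1}}=|u|_{W^{s_1,p_1}}$, so the result follows from H\"older's inequality in the Marcinkiewicz scale; your level-set split is that H\"older inequality unrolled.
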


The proofs of \cref{theorem_W11_Wsp,theorem_intro,theorem_intro_2} rely on our main \cref{theorem_Mp} and the details will appear in a forthcoming article.  

\bigskip

{\bf Acknowledgements.} This work was completed during two visits of J. Van Schaftingen to Rutgers University. He thanks H. Brezis for the invitation and  the Department of Mathematics for its hospitality.
P-L. Yung was partially supported by the General Research Fund CUHK14313716 from the Hong Kong Research Grant Council.
H. Brezis is grateful to C. Sbordone who communicated to him the interesting paper \cite{Greco_Schiattarella} by Greco and Schiattarella
which triggered our work.
We are indebted 
to E. Tadmor for useful comments.


\bibliographystyle{plain}
\bibliography{PNAS-Brezis-VanSchaftingen-Yung}

\end{document}